\newif\ifSubmission \Submissionfalse
\patchcmd{\@settitle}{\uppercasenonmath\@title}{\scshape\large}{}{}
\patchcmd{\@setauthors}{\MakeUppercase}{\scshape\normalsize}{}{}
\newtheorem{theorem}{Theorem}[section]
\newtheorem{lemma}[theorem]{Lemma}
\newtheorem{proposition}[theorem]{Proposition}
\newtheorem{corollary}[theorem]{Corollary}
\theoremstyle{definition}
\newtheorem*{standingassumption}{Standing Assumption}
\newcommand{\st}{\text{s.t.}}
\newcommand{\field}{\mathbb}
\newcommand{\reals}{\field{R}}
\newcommand{\R}{\reals}
\newcommand{\abbr}[1][abbrev]{#1.\xspace}
\newcommand{\eg}{\abbr[e.g]}
\newcommand{\define}{\mathrel{{\mathop:}{=}}}
\newcommand{\defset}[3][\defsep]{\set{#2#1#3}}
\newcommand{\Defset}[3][\defsep]{\Set{#2#1#3}}
\newcommand{\set}[1]{\{#1\}}
\newcommand{\Set}[1]{\left\{#1\right\}}
\DeclareMathOperator*{\argmin}{arg\,min}
\DeclareMathOperator{\proj}{proj}
\newcommand{\rev}[1]{#1}
\begin{document}

\title[On Coupling Constraints in Pessimistic Linear Bilevel
Optimization]{On Coupling Constraints in\\Pessimistic Linear Bilevel
  Optimization}

\author[D. Henke, H. Lefebvre, M. Schmidt, J. Thürauf]%
{Dorothee Henke, Henri Lefebvre, Martin Schmidt, Johannes Thürauf}

\address[D. Henke]{%
  University of Passau,
  School of Business, Economics and Information Systems,
  Chair of Business Decisions and Data Science,
  Dr.-Hans-Kapfinger-Str.\ 30,
  94032 Passau,
  Germany}
\email{dorothee.henke@uni-passau.de}

\address[H. Lefebvre, M. Schmidt]{%
  Trier University,
  Department of Mathematics,
  Universitäts\-ring 15,
  54296 Trier,
  Germany}
\email{henri.lefebvre@uni-trier.de}
\email{martin.schmidt@uni-trier.de}

\address[J. Thürauf]{%
  University of Technology Nuremberg (UTN),
  Department Liberal Arts and Social Sciences,
  Discrete Optimization Lab,
  Dr.-Luise-Herzberg-Str.\ 4,
  90461 Nuremberg,
  Germany}
\email{johannes.thuerauf@utn.de}

\date{\today}

\begin{abstract}
  The literature on pessimistic linear bilevel optimization with coupling
constraints is rather scarce and it has been common sense that these
problems are harder to tackle than pessimistic bilevel problems without
coupling constraints.
In this note, we show that this is not the case.
To this end, given a pessimistic problem with coupling constraints, we
derive a pessimistic problem without coupling constraints that has the
same set of globally optimal solutions.
Moreover, our results also show that one can equivalently replace a
pessimistic problem with such constraints with an optimistic problem
without coupling constraints.
This paves the way of both transferring theory and solution techniques
from any type of these problems to any other one.

\ifSubmission
\begin{center}
  \vspace*{1em}
  \emph{Communicated by Anil Aswani.}
\end{center}
\fi

\end{abstract}

\keywords{Bilevel optimization,
Pessimistic bilevel optimization,
Optimistic bilevel optimization,
Coupling constraints%
}
\subjclass[2020]{90Cxx, %
91A65%
}

\maketitle

\section{Introduction}
\label{sec:introduction}

Bilevel optimization gained significant scientific attention over the
last years and decades.
One of the main reasons is that this framework allows to model hierarchical
decision-making processes in which two agents interact.
The leader acts first while anticipating the optimal reaction of
the follower, who acts second and who takes the leader's decision
into account.
For introductions to the field, we refer to the books and lecture notes
by \textcite{Dempe:2002,Dempe-et-al:2015,Beck_Schmidt:2019}, in which
the interested reader can also find many illustrative examples.
This field of study dates back to the seminal contributions by
\textcite{Stackelberg:1934,Stackelberg:1952}, while the mathematical
optimization and operations research communities started to
investigate these problems in the 1970s and 1980s; see, e.g.,
\textcite{Bracken-Mcgill:1973,Candler-Norton:1977,Bialas-Karwan:1984}
for the earliest publications.
Since then, many scientific advances have been achieved ranging from
theoretical studies on, e.g., existence of solutions or optimality
conditions over structural insights and reformulations to algorithmic
approaches for actually solving these challenging problems.

In this note, we consider two key aspects of the field of bilevel
optimization.
First, bilevel problems are generally ill-posed in case of
multiplicities in the set of optimal reactions of the follower.
Usually, this is resolved by fixing the level of cooperation of the
follower, i.e., choosing a follower's solution that is in favor of
the leader's objective or a solution that is the worst for the
leader.
The former concept leads to the so-called optimistic bilevel problem
whereas the latter is referred to as the pessimistic bilevel problem;
see, e.g., \textcite{Dempe:2002} for a general introduction.
Second, many contributions in bilevel optimization consider either the
case with coupling constraints or without them, where a coupling
constraint is an upper-level constraint that explicitly depends on the
variables of the follower.

\rev{Regarding the first aspect, it was common sense in the literature
  that the pessimistic bilevel problem is harder to tackle---both in
  theory and practice.
  Hence, it got significantly less attention compared to optimistic
  bilevel optimization.
  Nevertheless, important contributions such as regarding optimality
  conditions \parencite{Dempe-et-al:2014} or
  reformulations \parencite{Aussel2019} have been made in the last
  years.
  Actually, very recently, it has been shown by \textcite{Zeng:2020}
  that pessimistic bilevel optimization problems are not harder to
  tackle in the sense that, for every pessimistic bilevel problem, one can
  derive an optimistic problem that can be solved instead.}

Regarding the aspect of coupling constraints, it has been known for
more than 25~years that they may lead to disconnected bilevel feasible
sets \parencite{Henke-et-al:2024}, while it is shown by
\textcite{Benson:1989} that the feasible set of
a linear bilevel problem without coupling constraints is always
connected.
The possibility of disconnected feasible sets mainly gained
prominence because it allows to model mixed-binary linear problems using
purely continuous linear bilevel models, see, \eg, Section~3
in~\textcite{Vicente-et-al:1996} and Section~3.1
in~\textcite{Audet-et-al:1997}.
This also played an important role
in early NP-hardness proofs of bilevel optimization; see, e.g.,
\textcite{Marcotte_Savard:2005}.
However, both variants of optimistic bilevel optimization with and
without coupling constraints are NP-complete; see
\textcite{Buchheim:2023}.
Hence, there is no difference between the two variants of optimistic
bilevel optimization in terms of their computational complexity.
Moreover, we showed in a previous paper \parencite{Henke-et-al:2024}
that---although they differ with respect to modeling different types
of feasible sets---they do not differ on the level of optimal
solutions.
More specifically, for every optimistic bilevel problem with coupling
constraints, one can derive another optimistic bilevel problem without
coupling constraints that has the same set of globally optimal
solutions.

In pessimistic bilevel optimization, the literature on problems with
coupling constraints is rather scarce; see, e.g.,
\textcite{Wiesemann-at-al:2013,Zeng:2020} or
the recent survey by \textcite{Beck_et_al:2023a}.
Particularly, some of the main theoretical contributions such as
\textcite{Dempe-et-al:2014,Aussel2019} only consider the case without
coupling constraints.
Somehow, the common sense seemed to be that pessimistic problems with
coupling constraints are much harder to deal with than their variants
without coupling constraints.
The main contribution of this note is to show that this is not the
case for linear bilevel problems.
To be more precise, we use the techniques introduced in
\textcite{Zeng:2020,Henke-et-al:2024} to show that, for every
pessimistic bilevel optimization problem with coupling constraints, we
can state a pessimistic bilevel optimization problem without coupling
constraints that has the same set of globally optimal solutions.
Moreover, we even show that, for every pessimistic bilevel optimization
problem with coupling constraints, we can also derive an optimistic
bilevel optimization problem without coupling constraints that has the
same set of globally optimal solutions.

\section{Problem Statement}
\label{sec:problem-statement}

In this note, we consider different types of linear bilevel
optimization problems.
The most basic one is the so-called optimistic bilevel optimization
problem without coupling constraints, which reads
\begin{equation}
  \label{eq:bilevel-opt-wo-ccs}
  \min_{x \in X} \quad
  F_{\text{o}}(x) \define c^\top x
  + \min_y \Defset{d^\top y}{y \in S(x)},
\end{equation}
where $S(x)$ is the set of optimal solutions to the $x$-parameterized
optimization problem
\begin{equation}
  \label{eq:bilevel-opt-ll}
  \min_y \quad f^\top y
  \quad \st \quad
  Cx + Dy \geq b.
\end{equation}
Here and in what follows, all variables are continuous and we omit the
dimensions for better readability.
Problem~\eqref{eq:bilevel-opt-wo-ccs} is called optimistic because the leader
is able to choose the lower-level variable~$y$ among all optimal ones
for the follower's problem if there are any multiplicities.

The extended version of this optimistic problem
that includes coupling constraints,
i.e., upper-level constraints depending on the lower-level variables,
can be written as
\begin{equation}
  \label{eq:bilevel-opt}
    \min_{x \in X} \quad
    F_{\text{oc}}(x) \define c^\top x
    + \min_y \Defset{d^\top y}{y \in S(x), \, Ax + By \geq a},
\end{equation}
where $S(x)$ is defined as before.

There is also the pessimistic bilevel problem, which
we again study in two different versions.
In the first one, only the upper-level objective function but not the
upper-level constraints depend on the follower's variables.
This problem is given by
\begin{equation}
  \label{eq:bilevel-press-wo-ccs}
  \min_{x \in \bar{X}} \quad
  F_{\text{p}}(x) \define c^\top x + \max_y \Defset{d^\top y}{y \in S(x)} \quad
\end{equation}
with
\begin{equation}
  \label{eq:bilevel-press-wo-ccs:new-ul-set}
  \bar{X} \define X \cap \Defset{x}{S(x) \neq \emptyset}.
\end{equation}
Here, $S(x)$ is again the same as before.

In the second version, the upper-level constraints depend on~$y$,
but we assume that the upper-level objective function does not depend
on~$y$ anymore.
This assumption can be made w.l.o.g.\ by using the classic epigraph
reformulation \rev{\parencite{Stein:2025}}.
The problem then reads
\begin{subequations}
  \label{eq:bilevel-press-w-ccs}
  \begin{align}
    \min_{x \in \bar{X}} \quad
    & F_{\text{pc}}(x) \define c^\top x
    \\
    \st \quad
    & Ax + By \geq a \quad \text{for all } y \in S(x).
    \label{eq:bilevel-press-w-ccs:cc}
  \end{align}
\end{subequations}

Let us note here that the pessimistic bilevel problems are stated
above in a slightly different way compared to what one usually finds in the
literature.
The difference is that we ensure the non-emptiness of the
lower level's solution set
in~\eqref{eq:bilevel-press-wo-ccs:new-ul-set}.
The problem is that, without this constraint, for the case without
coupling constraints, it would be the best for the leader to choose
an~$x \in X$ for which~$S(x) = \emptyset$.
The reason is that the inner optimization problem would then be a
maximization over the empty set, which formally evaluates to $-
\infty$ and which is the best possible outcome for the outer
minimization problem.
However, in many (even pessimistic) situations, the leader does not
want to actually make the follower's problem infeasible.
For instance, \textcite{Dempe-et-al:2014,Aussel2019} consider this
pessimistic setting without coupling constraints and without the
constraint~$S(x) \neq \emptyset$.
However, they make the assumption that $S(x) \neq \emptyset$ is satisfied
for every
$x\in X$, which resolves the situation sketched above.
On the other hand, the pessimistic problem with coupling constraints
is considered in, e.g.,
\textcite{Tahernejad-et-al:2020,Wiesemann-at-al:2013}.
However, both papers do not state any respective constraint or
assumption on $S(x)$.
This again means that it is possible that the best leader's strategy
would be to choose an~$x \in X$ so that $S(x) = \emptyset$ holds.
This implies that Constraint~\eqref{eq:bilevel-press-w-ccs:cc} vanishes.
Although this does not need to be wrong in a formal sense, in our
opinion, it is at least questionable if this is what really
should be modeled.
\rev{Note that the assumption $S(x) \neq \emptyset$ is not required for
  the optimistic problem~\eqref{eq:bilevel-opt} since choosing~$x$
  so that $S(x) = \emptyset$ is the worse option of the leader
  because, here, the inner and then also the outer minimization
  problem would evaluate to $+\infty$.}

For streamlining the presentation of the core ideas of this paper, we
make the following standing assumptions.
\begin{standingassumption}
  \begin{enumerate}
  \item[(i)] For all $x \in X$, the set $\defset{y}{Cx + Dy \geq b}$
    is non-empty and compact.
  \item[(ii)] The set $X$ is a non-empty polyhedron and all vectors
    and matrices have rational entries.
  \item[(iii)] All upper-level objective functions are bounded from below
    on~$X$.
  \end{enumerate}
\end{standingassumption}
The first assumption is sufficient to guarantee that $S(x) \neq
\emptyset$ holds for all $x \in X$.
The latter is also used in \textcite{Dempe-et-al:2014,Aussel2019}.
The second assumption is required for applying the results from
\textcite{Henke-et-al:2024} and the third one ensures solvability of
the overall problem.

Before we move on to our main results, let us brief\/ly discuss what
we consider to be a solution to the stated bilevel problems.
In the literature, it is not handled in a unique way if either $x$ or
$(x,y)$ is a solution to a bilevel problem.
For sure, for the pessimistic bilevel
problem~\eqref{eq:bilevel-press-w-ccs}, the solution can only be in
the $x$-space since the variables~$y$ of the follower are connected to
a universal quantifier in~\eqref{eq:bilevel-press-w-ccs:cc}.
In order to be consistent, we also consider the solutions to
\eqref{eq:bilevel-opt-wo-ccs}, \eqref{eq:bilevel-opt}, and
\eqref{eq:bilevel-press-wo-ccs} to be in the
$x$-space and interpret a respective~$y$, if given at all, only as a
certificate for the optimality of~$x$.
For a detailed discussion about the representation of solutions of bilevel
problems, we refer to Section~2.6 in \textcite{Goerigk_et_al:2025}.

\section{An Equivalent Reformulation without Coupling Constraints}
\label{sec:reform-with-coupl}

We now prove that pessimistic bilevel optimization with and without
coupling constraints are equivalent on the level of globally optimal
solutions.
To this end, we derive multiple reformulations and show their
equivalence properties.
The main proof strategy is given in
Figure~\ref{fig:models-and-reforms}.
The top, right, and bottom arc are considered in the respective
following sections.

\begin{figure}
  \centering
  \begin{tikzpicture}[node distance=2cm and 3.5cm, auto]
    \node (A) [draw, rectangle, rounded corners, minimum width=2.5cm,
    minimum height=1cm, align=center] {Pessimistic with\\coupling
      constraints};

    \node (B) [draw, rectangle, rounded corners, right=of A,
    minimum width=2.5cm, minimum height=1cm, align=center]
    {Optimistic with\\coupling constraints};

    \node (C) [draw, rectangle, rounded corners, below=of B,
    minimum width=2.5cm, minimum height=1cm, align=center]
    {Optimistic without\\coupling constraints};

    \node (D) [draw, rectangle, rounded corners, below=of A,
    minimum width=2.5cm, minimum height=1cm, align=center]
    {Pessimistic without\\coupling constraints};

    \draw[->, thick] (A) -- node[above] {Theorem~\ref{thm:pwcc-to-owcc}} (B);
    \draw[->, thick] (B) -- node[right] {Theorem~\ref{thm:owcc-to-o-wo-cc}} (C);
    \draw[->, thick] (C) -- node[below] {\rev{Theorem}~\ref{thm:opt-wo-cc-to-pess-wo-cc}} (D);
    \draw[->, thick, dashed] (A) -- node[left] {Goal} (D);
\end{tikzpicture}%
  \caption{Models and Reformulations}
  \label{fig:models-and-reforms}
\end{figure}
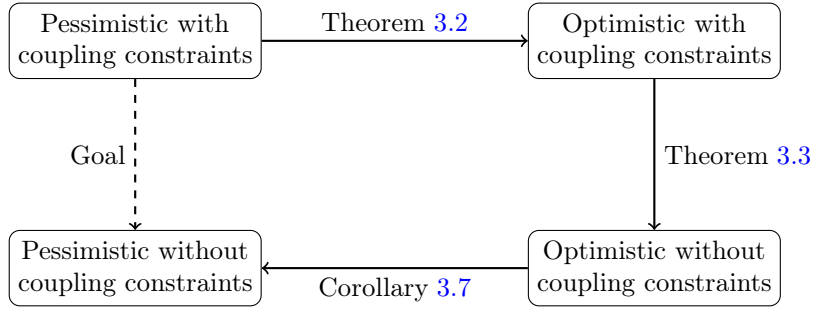

\subsection{From Pessimistic to Optimistic Bilevel
  Optimization with Coupling Constraints}
\label{sec:pess-w-cc-to-opt-w-cc}

First, we reformulate the pessimistic bilevel optimization problem
with coupling constraints, i.e.,
Problem~\eqref{eq:bilevel-press-w-ccs}, as an optimistic bilevel
problem with coupling constraints.
To this end, we use the results from Section~3.3 in
\textcite{Zeng:2020}, in particular Lemma~3.
This first requires to re-write the coupling constraints
\begin{equation*}
  Ax + By \geq a \quad \text{for all } y \in S(x)
\end{equation*}
in a component-wise way.
For this, let $A \in \R^{m \times {n_x}}$, $B \in \R^{m \times n_y}$,
and $a \in \R^m$ with $m$ being the number of coupling constraints and
with $n_x$ and $n_y$ being the numbers of upper- and lower-level variables,
respectively.
Hence, we have
\begin{equation}
  \label{eq:coupling-constrs-comp-wise}
  A_{i\cdot} x + B_{i\cdot} y \geq a_i
  \quad \text{for all } y \in S(x)
  \text{ and all } i \in [m] \define \Set{1, \dotsc, m}.
\end{equation}

We are now able to re-phrase Lemma~3 by \textcite{Zeng:2020} in our
notation.
For being more self-complete, we also add a simplified proof here.
\begin{lemma}
  \label{lem:bo-zong-aux-lemma}
  Let $x \in X$ be given and consider a fixed~$i \in [m]$.
  Then, $x$ satisfies the $i$-th coupling constraint
  in~\eqref{eq:coupling-constrs-comp-wise} if and only if there exist
  $\bar{y}$ and
  \begin{equation*}
    y^i \in \argmin \Defset{B_{i\cdot} y}{Dy \geq b - Cx, \, f^\top y
      \leq f^\top \bar{y}}
  \end{equation*}
  that satisfy
  \begin{equation*}
    D \bar{y} \geq b - Cx,
    \quad
    B_{i\cdot} y^i \geq a_i - A_{i\cdot} x.
  \end{equation*}
\end{lemma}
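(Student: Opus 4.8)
The plan is to recognize that the universally quantified $i$-th coupling constraint is nothing but a statement about the minimum of $B_{i\cdot} y$ over the lower-level optimal set $S(x)$, and that this optimal set admits a level-set description matching the inner minimization in the claim. Concretely, $x$ satisfies the $i$-th constraint in~\eqref{eq:coupling-constrs-comp-wise} if and only if $\min \Defset{B_{i\cdot} y}{y \in S(x)} \geq a_i - A_{i\cdot} x$. Writing $v(x) \define \min \Defset{f^\top y}{Dy \geq b - Cx}$ for the optimal lower-level value, we have $S(x) = \Defset{y}{Dy \geq b - Cx, \, f^\top y \leq v(x)}$, since lower-level feasibility already forces $f^\top y \geq v(x)$. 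The whole argument then rests on the observation that $v(x) = f^\top \bar{y}$ for any lower-level optimal $\bar{y}$, while mere feasibility of $\bar{y}$ only yields $f^\top \bar{y} \geq v(x)$.

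For the \enquote{only if} direction, I would fix a lower-level optimal point $\bar{y} \in S(x)$; such a point exists because $S(x) \neq \emptyset$ by Standing Assumption~(i). Then $D\bar{y} \geq b - Cx$ and $f^\top \bar{y} = v(x)$ hold, so that the inner feasible region $\Defset{y}{Dy \geq b - Cx, \, f^\top y \leq f^\top \bar{y}}$ coincides exactly with $S(x)$. Since this set is compact, again by Standing Assumption~(i), a minimizer $y^i$ of the linear function $B_{i\cdot} y$ over it exists, and because $y^i \in S(x)$, the assumed coupling constraint directly yields $B_{i\cdot} y^i \geq a_i - A_{i\cdot} x$.

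For the \enquote{if} direction, I would use that any $\bar{y}$ with $D\bar{y} \geq b - Cx$ is lower-level feasible and hence satisfies $f^\top \bar{y} \geq v(x)$. Consequently, the inner feasible region $\Defset{y}{Dy \geq b - Cx, \, f^\top y \leq f^\top \bar{y}}$ \emph{contains} $S(x)$. Picking any $y \in S(x)$, it is thus feasible for the inner problem, so that $B_{i\cdot} y \geq B_{i\cdot} y^i \geq a_i - A_{i\cdot} x$ follows from the optimality of $y^i$ and the assumed inequality, which is precisely the $i$-th coupling constraint.

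The argument is short, and no single step is a genuine obstacle; the only points that require care are the existence of the minimizers---which is exactly what Standing Assumption~(i) provides---and the asymmetric role of $\bar{y}$ in the two directions. In the \enquote{only if} part, $\bar{y}$ must be chosen \emph{optimal} so that the level set equals $S(x)$, whereas in the \enquote{if} part one only needs $\bar{y}$ to be \emph{feasible}, so that the level set merely contains $S(x)$. This asymmetry, together with the fact that a lower bound on the minimum over a superset is still a lower bound on the minimum over $S(x)$, is what makes the equivalence go through cleanly in both directions.
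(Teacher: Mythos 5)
Your proof is correct and follows essentially the same route as the paper's: both rewrite the coupling constraint as a minimum over $S(x)$, describe $S(x)$ as the level set $\Defset{y}{Dy \geq b - Cx, \, f^\top y \leq \varphi(x)}$ of the lower-level value function, take an \emph{optimal} $\bar{y}$ in the forward direction so the level set equals $S(x)$, and use in the converse that a merely \emph{feasible} $\bar{y}$ yields a superset of $S(x)$, so the assumed bound on the minimum transfers to $S(x)$. Your version is, if anything, slightly more explicit than the paper's in citing Standing Assumption~(i) for existence of the minimizers.
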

\begin{proof}
  Let $x\in X$ be given and consider a fixed $i\in[m]$.
  Then, the $i$-th coupling
  constraint in~\eqref{eq:coupling-constrs-comp-wise} is
  equivalent to $ \min_{y}\defset{ B_{i\cdot}y}{ y\in S(x) } \ge a_i -
  A_{i\cdot}x$, which can be reformulated as $ B_{i\cdot}y^i \ge a_i -
  A_{i\cdot}x $ with
  $
    y^i \in \argmin_y \Defset{ B_{i\cdot}y }{ y\in S(x) }.
  $
  Now, let $\varphi$ denote the optimal-value function of the lower-level
  problem~\eqref{eq:bilevel-opt-ll}. It follows that $x$ satisfies the $i$-th
  coupling constraint if and only if
  \begin{equation}
    \label{value-function-bo-zeng}
    B_{i\cdot}y^i \ge a_i - A_{i\cdot}x \quad\text{with}\quad y^i \in
    \argmin_y\Defset{ B_{i\cdot}y }{ Dy \ge b - Cx, \, f^\top y \le \varphi(x) }.
  \end{equation}

  We now show that the latter is equivalent to the stated conditions in the
  lemma. First, let us assume that~\eqref{value-function-bo-zeng} holds. Then,
  there exists $\bar{y}$ such that $\varphi(x) = f^\top \bar{y}$ and $D\bar y
  \ge b - Cx$ is satisfied. Hence, the conditions of the lemma hold.

  Conversely, assume that the conditions of the lemma are satisfied.
  The feasibility of $\bar{y}$ implies
  \begin{equation}\label{aux-statement-bo-zeng}
    \min_y\Defset{ B_{i\cdot}y }{ Dy \ge b - Cx, \, f^\top y \le f^\top \bar y } \le
    \min_y\Defset{ B_{i\cdot}y }{ Dy \ge b - Cx, \, f^\top y \le \varphi(x) }.
  \end{equation}
  Hence, \eqref{value-function-bo-zeng} is satisfied, which concludes the proof.
\end{proof}

\begin{corollary}
  \label{cor:bo-zong-aux-lemma}
  \rev{
  A given $x \in X$ satisfies all coupling constraints
  in~\eqref{eq:coupling-constrs-comp-wise} if and only if there exist
  $\bar{y}$ satisfying $D \bar{y} \geq b - Cx$ and, for every~$i \in [m]$,
  \begin{equation*}
    y^i \in \argmin \Defset{B_{i\cdot} y}{Dy \geq b - Cx, \, f^\top y
      \leq f^\top \bar{y}}
  \end{equation*}
  with}
  \begin{equation*}
    \rev{B_{i\cdot} y^i \geq a_i - A_{i\cdot} x.}
  \end{equation*}
\end{corollary}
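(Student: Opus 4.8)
The plan is to deduce the corollary directly from Lemma~\ref{lem:bo-zong-aux-lemma} by quantifying over all coupling-constraint indices simultaneously. Since a given $x \in X$ satisfies \emph{all} coupling constraints in~\eqref{eq:coupling-constrs-comp-wise} precisely when it satisfies the $i$-th one for every $i \in [m]$, I would begin by applying the lemma separately for each $i \in [m]$. The only subtlety is that a naive application produces, for each $i$, its own certificate $\bar{y}^i$ with $D\bar{y}^i \ge b - Cx$ and $f^\top \bar{y}^i = \varphi(x)$, whereas the corollary asserts the existence of a \emph{single} $\bar{y}$ that works for all $i$ at once. So the main point to verify is that these per-index certificates can be replaced by one common certificate.

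First I would prove the forward direction. Assume $x$ satisfies all coupling constraints. I would fix any optimal lower-level solution $\bar{y} \in S(x)$, which exists by Standing Assumption~(i); note that $f^\top \bar{y} = \varphi(x)$ and $D\bar{y} \ge b - Cx$. This single $\bar{y}$ serves as the common certificate. For each $i \in [m]$, applying the forward implication of Lemma~\ref{lem:bo-zong-aux-lemma} yields some $y^i \in \argmin\defset{B_{i\cdot} y}{Dy \ge b - Cx, \, f^\top y \le f^\top \bar{y}}$ with $B_{i\cdot} y^i \ge a_i - A_{i\cdot} x$. Here it is essential that the feasible region $\defset{y}{Dy \ge b - Cx, \, f^\top y \le \varphi(x)}$ in~\eqref{value-function-bo-zeng} coincides with $S(x)$, so the chosen $\bar{y}$ is indeed consistent with the constraint $f^\top y \le f^\top \bar{y}$ appearing in the argmin for every $i$.

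For the converse direction, I would assume the existence of $\bar{y}$ with $D\bar{y} \ge b - Cx$ together with the family $(y^i)_{i \in [m]}$ satisfying the stated conditions. For each fixed $i$, the triple $(\bar{y}, y^i)$ is exactly a valid witness for the reverse implication of Lemma~\ref{lem:bo-zong-aux-lemma}, so $x$ satisfies the $i$-th coupling constraint. Since this holds for every $i \in [m]$, the point $x$ satisfies all coupling constraints in~\eqref{eq:coupling-constrs-comp-wise}, completing the equivalence.

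The step I expect to be the main obstacle is the forward direction, specifically ensuring that one \emph{uniform} $\bar{y}$ suffices rather than index-dependent certificates. The key observation that resolves this is that the role of $\bar{y}$ in the lemma is only to encode the optimal lower-level value via $f^\top \bar{y} = \varphi(x)$, and this value is independent of $i$. Hence any single optimal solution $\bar{y} \in S(x)$ pins down the same constraint $f^\top y \le f^\top \bar{y} = \varphi(x)$ across all indices, so the argmin sets in the corollary are governed by one shared value-function constraint. Everything else is a routine index-wise invocation of the lemma, so no further technical difficulty is anticipated.
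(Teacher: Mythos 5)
Your proof is correct and takes essentially the same route as the paper's: both handle the only real subtlety---producing a \emph{single} certificate $\bar{y}$ for all $i \in [m]$---by choosing $\bar{y} \in S(x)$, so that $f^\top \bar{y} = \varphi(x)$ makes the constraint $f^\top y \leq f^\top \bar{y}$ index-independent and identifies each argmin set with the one in Condition~\eqref{value-function-bo-zeng}. The converse in both cases is just an index-wise application of Lemma~\ref{lem:bo-zong-aux-lemma} resting on Inequality~\eqref{aux-statement-bo-zeng}, so there is no gap.
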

\begin{proof}
  \rev{
  In line with the proof of Lemma~\ref{lem:bo-zong-aux-lemma}, it follows that a
  given leader's decision~$x \in X$ satisfies all coupling constraints
  in~\eqref{eq:coupling-constrs-comp-wise} if and only if, for each~$i \in [m]$,
  Condition~\eqref{value-function-bo-zeng} is satisfied.
  Consequently, if the latter conditions hold, there exists $\bar{y}$ such that
  $\varphi(x) = f^\top \bar{y}$
  and $D\bar y \ge b - Cx$.
  This implies the conditions of the corollary.
  Conversely, assuming that the conditions of the corollary hold, the claim
  follows using Inequality~\eqref{aux-statement-bo-zeng} analogously to the
  proof of Lemma~\ref{lem:bo-zong-aux-lemma}.
  }
\end{proof}

From this \rev{corollary}, we obtain the following result.

\begin{theorem}
  \label{thm:pwcc-to-owcc}
  Let $\mathcal{S}$ be the set of globally optimal solutions to the
  pessimistic bilevel problem~\eqref{eq:bilevel-press-w-ccs} with
  coupling constraints, i.e., to the problem
  \begin{subequations}
    \label{eq:bo-zeng-orig-problem}
    \begin{align}
      \min_{x \in X} \quad
      & F_{\mathrm{pc}}(x) = c^\top x
      \\
      \st \quad
      & Ax + By \geq a \quad \text{for all } y \in S(x).
    \end{align}
  \end{subequations}
  Moreover, let $\hat{\mathcal{S}}$ be the set of globally
  optimal solutions to the optimistic bilevel problem
  \begin{equation*}
    \min_{(x, \bar{y}) \in \tilde{X}} \
    c^\top x + \min_y \Defset{0}{y \in \hat{S}(x, \bar{y}), \,
      B_{i\cdot} y^i \geq a_i - A_{i\cdot} x \text{ for all } i \in [m]},
  \end{equation*}
  with $\tilde{X} = \defset{(x, \bar{y})}{x \in X, D \bar{y} \geq b -
    Cx}$ and with $\hat{S}(x, \bar{y})$ being the set of optimal
  solutions to the lower-level problem
  \begin{align*}
    \min_y \quad
    & \sum_{i=1}^m B_{i\cdot} y^i
    \\
    \st \quad
    & Dy^i \geq b - Cx
      \quad \text{for all } i \in [m],
    \\
    & f^\top y^i \leq f^\top \bar{y}
      \quad \text{for all } i \in [m].
  \end{align*}
  Then,
  \begin{equation*}
    \mathcal{S} = \proj_x(\hat{\mathcal{S}})
  \end{equation*}
  holds and the optimal objective function values coincide.
\end{theorem}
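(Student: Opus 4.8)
The plan is to reduce the statement to the feasibility characterization provided by Corollary~\ref{cor:bo-zong-aux-lemma}, so that both problems turn out to optimize the \emph{same} objective $c^\top x$ over the \emph{same} feasible $x$-set. The first observation I would record is that the lower-level problem defining $\hat{S}(x,\bar{y})$ is separable across the blocks $y^1, \dotsc, y^m$: both its objective $\sum_{i=1}^m B_{i\cdot} y^i$ and its constraints $Dy^i \geq b - Cx$, $f^\top y^i \leq f^\top \bar{y}$ decouple component-wise, so that
\[
  \hat{S}(x,\bar{y}) = \prod_{i=1}^m \argmin_y \Defset{B_{i\cdot} y}{Dy \geq b - Cx, \, f^\top y \leq f^\top \bar{y}}.
\]
Moreover, since any $\bar{y}$ with $(x,\bar{y}) \in \tilde{X}$ is itself feasible for each component problem, standing assumption~(i) guarantees that every factor is a non-empty compact set and hence $\hat{S}(x,\bar{y}) \neq \emptyset$.

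Next I would translate the inner minimization of the optimistic problem into a pure feasibility test. Because its objective is the constant~$0$, the value of $\min_y\set{0 : y \in \hat{S}(x,\bar{y}),\ B_{i\cdot} y^i \geq a_i - A_{i\cdot} x \text{ for all } i}$ equals~$0$ if some $y \in \hat{S}(x,\bar{y})$ satisfies all coupling constraints and equals $+\infty$ otherwise. Using the product structure above, such a $y$ exists if and only if, for every $i \in [m]$, there is $y^i \in \argmin\set{B_{i\cdot} y : Dy \geq b - Cx,\ f^\top y \leq f^\top \bar{y}}$ with $B_{i\cdot} y^i \geq a_i - A_{i\cdot} x$. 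Together with $D\bar{y} \geq b - Cx$ encoded in $\tilde{X}$, this is exactly the condition of Corollary~\ref{cor:bo-zong-aux-lemma}. Consequently, the objective value of the optimistic problem at $(x,\bar{y}) \in \tilde{X}$ equals $c^\top x$ whenever the inner problem is feasible and $+\infty$ otherwise, and $\proj_x$ of the set of $(x,\bar{y}) \in \tilde{X}$ with feasible inner problem coincides, by the corollary, with the set of $x \in X$ feasible for the pessimistic problem~\eqref{eq:bilevel-press-w-ccs}.

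With this identification in hand, the rest is bookkeeping. The pessimistic problem minimizes $c^\top x$ over its feasible $x$-set, while the optimistic problem minimizes $c^\top x$ over all $(x,\bar{y}) \in \tilde{X}$ with feasible inner problem; since the optimistic objective depends on $x$ alone and the $\proj_x$-image of the optimistic-feasible $(x,\bar{y})$-set is exactly the pessimistic-feasible $x$-set, the two optimal values coincide. For the equality $\mathcal{S} = \proj_x(\hat{\mathcal{S}})$ I would argue both inclusions: if $x \in \mathcal{S}$, then $x$ is pessimistic-feasible with $c^\top x$ equal to the common optimal value, so the corollary supplies some $\bar{y}$ with $(x,\bar{y}) \in \tilde{X}$ and feasible inner problem, making $(x,\bar{y})$ optimistic-optimal and thus $x \in \proj_x(\hat{\mathcal{S}})$; conversely, any $(x,\bar{y}) \in \hat{\mathcal{S}}$ attains the finite optimal value $c^\top x$, which forces its inner problem to be feasible, so $x$ is pessimistic-feasible with optimal objective value and hence $x \in \mathcal{S}$.

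The main obstacle I anticipate is the careful handling of the shared variable $\bar{y}$: the corollary demands a \emph{single} $\bar{y}$ serving all $m$ coupling constraints simultaneously, so I must ensure the separability argument couples the blocks only through this common $\bar{y}$ rather than silently permitting a different $\bar{y}$ per component. Keeping $\bar{y}$ fixed at the upper level of the optimistic problem while letting only the $y^i$ vary in the separable lower level is precisely what aligns the two formulations, and this is where I would be most careful.
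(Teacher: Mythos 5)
Your proposal is correct and takes essentially the same route as the paper: the paper introduces an intermediate single-leader multi-follower problem whose ``independent followers'' are precisely your product decomposition of $\hat{S}(x,\bar{y})$, and then, exactly as you do, applies Corollary~\ref{cor:bo-zong-aux-lemma} (with one common $\bar{y}$ for all $i \in [m]$) to identify the projected feasible sets, concluding since both objectives equal $c^\top x$. Your rendering is somewhat more explicit about the feasibility-test interpretation of the inner minimization and the non-emptiness of $\hat{S}(x,\bar{y})$, but these are presentational differences, not a different argument.
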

\begin{proof}
  \rev{We prove the theorem with the help of an auxiliary and intermediate
  problem.
  To this end, let $\tilde{\mathcal{S}}$ be the set of globally optimal
  solutions to the optimistic single-leader multi-follower problem
  \begin{equation}
    \label{eq:bo-zeng-multi-follower}
    \min_{(x, \bar{y}) \in \tilde{X}} \
    c^\top x + \min_{y} \Defset{0}{y^i \in \tilde{S}^i(x,\bar{y}), \,
      B_{i\cdot} y^i \geq a_i - A_{i\cdot} x \text{ for all } i \in [m]}
  \end{equation}
  with $\tilde{S}^i(x,\bar{y}) = \argmin_{y'} \defset{B_{i\cdot}
    y'}{Dy' \geq b - Cx, f^\top y' \leq f^\top \bar{y}}$
  and $y = (y^i)_{i=1}^m$.}
  The identity $\proj_x(\tilde{\mathcal{S}}) =
  \proj_x(\hat{\mathcal{S}})$ is straightforward because all
  follower's problems in~\eqref{eq:bo-zeng-multi-follower} are
  independent.
  \rev{Hence, the proof is complete if we show $\mathcal{S} =
  \proj_x(\tilde{\mathcal{S}})$.}
  We start by proving $\mathcal{S} \subseteq
  \proj_x(\tilde{\mathcal{S}})$.
  To this end, let~$x$ be feasible for
  Problem~\eqref{eq:bo-zeng-orig-problem}, implying $x \in X$.
  \rev{
  By applying \rev{Corollary~\ref{cor:bo-zong-aux-lemma}},
  there exist~$\bar{y}$ with~$D \bar{y}
  \geq b - Cx$ and, for every~$i \in [m]$, some $y^i \in \argmin_y
  \defset{B_{i\cdot} y}{Dy \geq b - Cx, f^\top y \leq f^\top \bar{y}}$
  satisfying  $B_{i\cdot} y^i \geq a_i - A_{i\cdot} x$.}
  Thus, $y^i \in \tilde{S}^i(x,\bar{y})$ holds, meaning that, for the
  given~$x$, there exists $\bar{y}$ so that $(x,\bar{y})$ is feasible
  for \eqref{eq:bo-zeng-multi-follower}.
  Since the respective upper-level objective functions coincide and
  only depend on~$x$, we showed $\mathcal{S} \subseteq
  \proj_x(\tilde{\mathcal{S}})$.
  Due to Lemma~\ref{lem:bo-zong-aux-lemma} being an if-and-only-if
  statement, the other direction follows by using the same
  arguments.
\end{proof}

Hence, we have shown that a pessimistic bilevel problem with coupling
constraints can be equivalently re-written as an optimistic bilevel
problem with coupling constraints.

\subsection{From Optimistic Bilevel Optimization with to without
  Coupling Constraints}
\label{sec:opt-w-to-wo-ccs}

To reformulate an optimistic bilevel optimization problem
with coupling constraints as an optimistic bilevel optimization
problem without coupling constraints,
we use the main result of~\textcite{Henke-et-al:2024}.
\rev{We restate this result here so that it fits directly to our
  setup.}

\begin{theorem}[Corollary 2.3 in~\textcite{Henke-et-al:2024}]
  \label{thm:owcc-to-o-wo-cc}
  There is a polynomial-sized (in the bit-encoding length of the
  problem's data) penalty parameter~$\kappa > 0$ so
  that the optimistic bilevel problem~\eqref{eq:bilevel-opt} with
  coupling constraints has the same set of globally optimal solutions
  as the optimistic bilevel problem
  \begin{equation*}
    \min_{x \in X} \quad c^\top x
    + \min_{y, \varepsilon}
    \Defset{d^\top y + \kappa \varepsilon}{(y, \varepsilon) \in
      S'(x)}
  \end{equation*}
  without coupling constraints, where $S'(x)$ is the set of optimal
  solutions to the $x$-parameterized lower-level problem
  \begin{align*}
    \min_{y, \varepsilon} \quad
    & f^\top y \\
    \st \quad
    & Ax+By + \varepsilon e \geq a, \\
    & Cx + Dy \geq b, \\
    & \varepsilon \geq 0,
  \end{align*}
  where $e$ is the vector of all ones in appropriate dimension.
  Moreover, both bilevel problems have the same optimal objective
  function value.
\end{theorem}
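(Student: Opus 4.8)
Since this theorem restates Corollary~2.3 of \textcite{Henke-et-al:2024}, the cleanest route is to invoke that result directly; for completeness I sketch the exact-penalty argument that underlies it. The plan is to proceed in three stages: first determine what the relaxed lower-level problem computes for a fixed~$x$, then reduce the inner optimistic minimization to a single $\ell_\infty$-type penalty, and finally turn this penalty into an \emph{exact} one by choosing~$\kappa$ above a data-dependent threshold.

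First I would analyze the lower-level problem defining $S'(x)$ for fixed $x \in X$. Since the slack~$\varepsilon$ does not appear in its objective~$f^\top y$ and any~$y$ with $Cx + Dy \geq b$ can be rendered feasible by taking $\varepsilon$ large enough, the lower-level optimal value in the $y$-component coincides with the original value~$\varphi(x)$ of~\eqref{eq:bilevel-opt-ll}. Consequently, $S'(x)$ consists exactly of the pairs $(y, \varepsilon)$ with $y \in S(x)$ and $\varepsilon \geq \varepsilon(y) \define \max_{i \in [m]} (a_i - A_{i\cdot} x - B_{i\cdot} y)^+$. Because the leader minimizes $d^\top y + \kappa \varepsilon$ with $\kappa > 0$, for each fixed $y \in S(x)$ the optimal slack equals~$\varepsilon(y)$, so the inner optimistic value of the penalized problem reduces to $\min_{y \in S(x)} ( d^\top y + \kappa\, \varepsilon(y) )$. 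This is precisely a penalty surrogate of the inner problem $\min \defset{d^\top y}{y \in S(x),\, Ax + By \geq a}$ appearing in the coupling-constrained problem~\eqref{eq:bilevel-opt}.

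Next I would establish the penalty's exactness. For a fixed~$x$ whose optimal face admits a coupling-feasible point, both $S(x)$ (a face of a rational polyhedron) and its intersection with $\defset{y}{Ax + By \geq a}$ are rational polyhedra, so a Hoffman-type error bound yields, for any $y \in S(x)$, a coupling-feasible $y' \in S(x)$ with $d^\top y' \leq d^\top y + L\, \varepsilon(y)$. The decisive point is that the constant~$L$ is governed by subdeterminants of the \emph{fixed} data $D$, $f$, and $B$ (and by $d$), and does \emph{not} depend on~$x$, since $x$ enters only through the right-hand sides $b - Cx$ and $a - Ax$, which do not affect a Hoffman constant; this is what makes the threshold uniform over $x \in X$ and, by the standard size bounds for vertices and dual multipliers of rational linear programs, polynomially bounded in the bit-encoding length of the data. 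For $\kappa \geq L$, accepting any positive violation is then never beneficial at a coupling-feasible~$x$, so the penalized inner value equals the constrained inner value there.

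The hard part will be to turn this fibrewise exactness into the claimed equality of the two \emph{global} solution sets, which is the genuine content of \textcite{Henke-et-al:2024}. Two things must be controlled uniformly in~$x$: that the constant~$L$ is indeed uniform (handled above through the $x$-independence of the constraint matrices, together with Standing Assumption~(i) and~(ii)), and that leader's decisions~$x$ whose optimal face \emph{violates} the coupling constraints cannot create spurious global optima below the original optimal value~$V^*$; the subtle case is a strictly infeasible~$x$ near the feasible boundary, where the minimum violation, and hence the penalty, is small, so one needs a \emph{global} error bound on the combined system rather than the fibrewise one. Once a uniform $\kappa \geq L$ is fixed, the penalized upper-level objective coincides with $F_{\text{oc}}$ on every coupling-feasible~$x$ and stays strictly above~$V^*$ elsewhere, so both problems share the same minimizers in the $x$-space and the same optimal value; reinserting the optimal slack $\varepsilon = \varepsilon(y) = 0$ at an optimal feasible point supplies the certificate $(y, \varepsilon) \in S'(x)$ and completes the identification.
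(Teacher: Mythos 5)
Your primary move---invoking Corollary~2.3 of \textcite{Henke-et-al:2024} directly---is exactly what the paper does: the theorem is stated as a restatement of that external result, and the paper supplies no proof of its own beyond the citation. Your supplementary sketch of the underlying exact-penalty argument (the slack characterization of $S'(x)$, a Hoffman-type constant independent of $x$ because $x$ enters only the right-hand sides, and the need for a global bound to rule out spurious optima at coupling-infeasible $x$) is a reasonable outline of the cited work's strategy, but it is an addition to, not a departure from, the paper's approach.
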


\textcite{Henke-et-al:2024} state as an open question how to compute
the penalty parameter~$\kappa$ in polynomial time.
This question is answered by Lemma~4 of
\textcite{Lefebvre-Schmidt:2024}.

\subsection{From Optimistic to Pessimistic Bilevel Optimization
  without Coupling Constraints}
\label{sec:opt-wo-cc-to-pess-wo-cc}

In this section, we show how to reformulate an optimistic bilevel
problem~\eqref{eq:bilevel-opt-wo-ccs} without coupling constraints as
a pessimistic bilevel problem without coupling constraints so that the
globally optimal solutions coincide.
To this end, we consider the following auxiliary optimistic bilevel
problem
\begin{equation}
  \label{eq:bilevel-epsilon}
  \min_{(x,\bar{y})\in \tilde{X}} \quad F_{\text{oa}}(x,\bar{y})
  \define c^\top x + d^\top \bar{y} + \min_{y,\varepsilon} \Defset{0}{
    \varepsilon = 0, \, (y,\varepsilon) \in \tilde S(x, \bar{y}) }
\end{equation}
with a single coupling constraint.
Again, we use $\tilde{X} = \Defset{(x, \bar{y})}{x \in X, D
  \bar{y} \ge b - Cx }$, and $\tilde S(x, \bar{y})$ denotes the set of
optimal points to
\begin{subequations}
  \label{eq:bilevel-epsilon-lower}
  \begin{align}
    \min_{y,\varepsilon} \quad
    & f^\top y \\
    \text{s.t.} \quad
    & Cx + Dy \ge b, \\
    & f^\top \bar{y} - f^\top y  = \varepsilon,
      \label{eq:eps-lower-level-objective-value} \\
    & \varepsilon \ge 0.
  \end{align}
\end{subequations}
The main intuition behind this problem is that the leader can choose
the most favorable lower-level feasible point~$\bar{y}$ in terms of
her objective function while the follower computes the non-negative
difference between the actual optimal lower-level objective value and
the one corresponding to~$\bar{y}$ in
Constraint~\eqref{eq:eps-lower-level-objective-value}.
Finally, the coupling constraint~$\varepsilon=0$ ensures that this
difference is zero, i.e., the leader's decision~$\bar{y}$ is also
optimal for the lower-level problem~\eqref{eq:bilevel-opt-ll}.

\begin{lemma} \label{lemma:opt-wo-cc-to-bilevel-epsilon}
  For every bilevel feasible point $x$ of the optimistic bilevel
  problem~\eqref{eq:bilevel-opt-wo-ccs} without coupling constraints,
  the point $(x,\bar{y})$ with $\bar{y}\in \argmin_{y} \defset{ d^\top
    y}{ y \in S(x) }$
  is also bilevel feasible for the optimistic bilevel
  problem~\eqref{eq:bilevel-epsilon} with the same objective value.
  Moreover, for every globally optimal point $(x,\bar{y})$ to
  Problem~\eqref{eq:bilevel-epsilon}, $x$ is bilevel feasible
  for~\eqref{eq:bilevel-opt-wo-ccs} with the same objective value.
\end{lemma}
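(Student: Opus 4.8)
The plan is to first pin down exactly when a pair $(x,\bar{y})$ is bilevel feasible for Problem~\eqref{eq:bilevel-epsilon} and what its objective value is in that case; the two claimed directions then follow quickly. Let $\varphi(x) \define \min_y \defset{f^\top y}{Cx + Dy \geq b}$ denote the optimal-value function of the lower-level problem~\eqref{eq:bilevel-opt-ll}. For any $(x,\bar{y}) \in \tilde{X}$, the point $\bar{y}$ is lower-level feasible, hence $f^\top \bar{y} \geq \varphi(x)$, so the additional constraint $f^\top \bar{y} - f^\top y = \varepsilon \geq 0$ in~\eqref{eq:bilevel-epsilon-lower}, i.e., $f^\top y \leq f^\top \bar{y}$, is slack at the lower-level optimum. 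Consequently, the optimal value of~\eqref{eq:bilevel-epsilon-lower} is again $\varphi(x)$, its optimal $y$-components are exactly the elements of $S(x)$, and by~\eqref{eq:eps-lower-level-objective-value} every optimal pair carries the same $\varepsilon = f^\top \bar{y} - \varphi(x)$. Thus the coupling constraint $\varepsilon = 0$ is satisfiable by some element of $\tilde{S}(x,\bar{y})$ if and only if $f^\top \bar{y} = \varphi(x)$, that is, if and only if $\bar{y} \in S(x)$. In summary, $(x,\bar{y})$ is bilevel feasible for~\eqref{eq:bilevel-epsilon} precisely when $x \in X$ and $\bar{y} \in S(x)$, and in that case $F_{\text{oa}}(x,\bar{y}) = c^\top x + d^\top \bar{y}$.

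With this characterization, the first statement is immediate. Given a bilevel feasible $x$ of~\eqref{eq:bilevel-opt-wo-ccs} and $\bar{y} \in \argmin_y \defset{d^\top y}{y \in S(x)}$, we have $x \in X$ and $\bar{y} \in S(x)$, so $(x,\bar{y}) \in \tilde{X}$ and $(x,\bar{y})$ is bilevel feasible for~\eqref{eq:bilevel-epsilon} by the above. The objective values agree since $F_{\text{o}}(x) = c^\top x + \min_y \defset{d^\top y}{y \in S(x)} = c^\top x + d^\top \bar{y} = F_{\text{oa}}(x,\bar{y})$, where the middle equality is the defining property of $\bar{y}$.

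For the second statement, let $(x,\bar{y})$ be globally optimal for~\eqref{eq:bilevel-epsilon}. By the characterization it is bilevel feasible, so $x \in X$ and $\bar{y} \in S(x)$; in particular $S(x) \neq \emptyset$, and hence $x$ is bilevel feasible for~\eqref{eq:bilevel-opt-wo-ccs}. The one genuinely non-routine point---and the step I expect to be the crux---is to show that the value carried by $\bar{y}$ is not merely feasible but actually coincides with $F_{\text{o}}(x)$, i.e., that $\bar{y}$ is $d$-optimal over $S(x)$. I would establish this by an exchange argument that exploits global optimality: if some $\bar{y}' \in S(x)$ satisfied $d^\top \bar{y}' < d^\top \bar{y}$, then $(x,\bar{y}')$ would again be bilevel feasible for~\eqref{eq:bilevel-epsilon}, yet with strictly smaller objective $c^\top x + d^\top \bar{y}' < c^\top x + d^\top \bar{y}$, contradicting the global optimality of $(x,\bar{y})$. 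Hence $d^\top \bar{y} = \min_y \defset{d^\top y}{y \in S(x)}$, and therefore $F_{\text{oa}}(x,\bar{y}) = c^\top x + d^\top \bar{y} = F_{\text{o}}(x)$, which completes the proof.
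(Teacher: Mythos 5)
Your proof is correct and follows essentially the same route as the paper's: both rest on the observation that the lower-level problem~\eqref{eq:bilevel-epsilon-lower} has optimal value $\varphi(x)$ with $\varepsilon$ uniquely determined, so that bilevel feasibility of $(x,\bar{y})$ for~\eqref{eq:bilevel-epsilon} amounts to $\bar{y} \in S(x)$, and both use the same exchange argument (swapping in a $d$-better point of $S(x)$ to contradict global optimality) for the converse direction. Your up-front characterization of the feasible set of~\eqref{eq:bilevel-epsilon} is a slightly cleaner packaging than the paper's two-case analysis, but the substance is identical.
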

\begin{proof}
  Because $x$ is bilevel feasible
  for~\eqref{eq:bilevel-opt-wo-ccs}, there exists a point~$y\in S(x)$
  such that $F_{\text{o}}(x) = c^\top x + d^\top y$.
  Note that the objective function
  of~\eqref{eq:bilevel-epsilon-lower} does not depend on
  $\varepsilon$.
  Moreover, for fixed $x$ and any $\bar{y}$ satisfying~$Cx +
  D\bar{y} \ge b$, the inequality $f^\top\bar{y} \ge f^\top y$ holds.
  Consequently, the optimal objective value
  of~\eqref{eq:bilevel-epsilon-lower} is exactly $f^\top y$.
  Thus, for a given $x$ and $\bar{y} \define y$, we have $(y,0)\in \tilde
  S(x,\bar{y})$ and $(x,\bar{y})$ is a
  bilevel feasible point for~\eqref{eq:bilevel-epsilon} with
  $F_{\text{oa}}(x,\bar{y}) = F_{\text{o}}(x)$.

  Conversely, because $(x,\bar{y})$ is a globally optimal point
  for~\eqref{eq:bilevel-epsilon}, there exists $(y,\varepsilon)\in
  \tilde{S}(x,\bar{y})$ with $\varepsilon = 0$.
  Consequently, $f^{\top} y = f^{\top} \bar{y}$ holds.
  Following the same line of arguments as above, we obtain $y\in
  S(x)$.
  This implies that~$x$ is feasible for~\eqref{eq:bilevel-opt-wo-ccs}.
  We are left to prove that $F_{\text{oa}}(x,\bar{y}) =
  F_{\text{o}}(x)$ holds.
  Assume that this is not the case.
  Hence, $d^\top y \neq d^\top \bar{y}$ needs to hold.
  If $d^\top y < d^\top \bar{y}$, we could choose $(x, \hat{y})$ with
  $\hat{y} \define y$.
  This yields $(y,0) \in \tilde{S}(x,\hat{y})$, which
  again implies $F_{\text{oa}}(x,\hat{y}) = F_{\text{o}}(x)$.
  This contradicts the optimality of $(x, \bar{y}$).
  If $d^\top y > d^\top \bar{y}$, then $\bar{y}$ would be a
  better lower-level solution than~$y$ in terms of the leader's
  objective function in \eqref{eq:bilevel-opt-wo-ccs}, which, again,
  is a contradiction.
  Hence, $F_{\text{oa}}(x,\bar{y}) = F_{\text{o}}(x)$ holds, which
  ends the proof.
\end{proof}

Using the same proof techniques that lead to Theorem~2.2
of~\textcite{Henke-et-al:2024}, we move the single coupling
constraint~$\varepsilon=0$ of \eqref{eq:bilevel-epsilon} to the
leader's objective function.
This yields the following lemma.

\begin{lemma}
  \label{lemma:henke-penalized-eps-version}
  There is a polynomial-sized parameter $\kappa > 0$ so that
  Problem~\eqref{eq:bilevel-epsilon} has the same set of globally
  optimal solutions as the optimistic bilevel problem
  \begin{equation}
    \label{eq:bilevel-epsilon-wo-coupling}
    \min_{(x,\bar{y})\in \tilde{X}} \quad F_{\mathrm{o\kappa}}(x,\bar{y})
    \define c^\top x + d^\top \bar{y} + \min_{y,\varepsilon}
    \Defset{\kappa \varepsilon}{(y,\varepsilon) \in \tilde S(x, \bar{y}) }
  \end{equation}
  without coupling constraints.
  Here, we again use $\tilde{X} = \defset{(x, \bar{y})}{x \in X, D
    \bar{y} \ge b - Cx }$ and $\tilde S(x, \bar{y})$ is the set of
  optimal solutions of~\eqref{eq:bilevel-epsilon-lower}.
\end{lemma}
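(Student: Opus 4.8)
The plan is to read Problem~\eqref{eq:bilevel-epsilon} as an optimistic bilevel problem with a \emph{single} coupling constraint and then to invoke the exact-penalization machinery behind Theorem~\ref{thm:owcc-to-o-wo-cc}, i.e., Theorem~2.2 of~\textcite{Henke-et-al:2024}. Concretely, I would view~\eqref{eq:bilevel-epsilon} as an instance of~\eqref{eq:bilevel-opt} in which the leader variable is $(x,\bar{y})$, the leader set is $\tilde{X}$, the leader objective is $c^\top x + d^\top \bar{y}$, the follower variable is $(y,\varepsilon)$, the follower's problem is~\eqref{eq:bilevel-epsilon-lower}, the follower contributes the constant~$0$ to the leader objective, and the only coupling constraint is $-\varepsilon \ge 0$, which together with the lower-level constraint $\varepsilon \ge 0$ is exactly $\varepsilon = 0$. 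Before applying the result I would verify that the standing assumptions carry over to this instance: for every $(x,\bar{y})\in\tilde{X}$ the feasible set of~\eqref{eq:bilevel-epsilon-lower} is nonempty (it contains $(\bar{y},0)$) and compact (its $y$-part is a closed subset of the compact set $\defset{y}{Cx+Dy\ge b}$, and $\varepsilon$ is a continuous function of~$y$), the set $\tilde{X}$ is a nonempty rational polyhedron, and boundedness from below of the leader objective follows from Lemma~\ref{lemma:opt-wo-cc-to-bilevel-epsilon} together with Standing Assumption~(iii).

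Applying the penalization then relaxes $-\varepsilon \ge 0$ by a nonnegative slack~$\varepsilon'$ via $-\varepsilon + \varepsilon' \ge 0$ and moves $\kappa\varepsilon'$ into the leader objective. The simplification I would carry out is to note that this slack is redundant here. Since $\bar{y}$ is lower-level feasible, the constraint $\varepsilon \ge 0$ never binds at the lower-level optimum, so every optimal point of~\eqref{eq:bilevel-epsilon-lower} satisfies $f^\top y = \varphi(x)$ with $\varphi(x) = \min_y\defset{f^\top y}{Cx+Dy\ge b}$ and hence carries the fixed value $\varepsilon = f^\top\bar{y} - \varphi(x) \ge 0$. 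Thus the smallest feasible slack is $\varepsilon' = \varepsilon$, and the penalized leader objective collapses to $c^\top x + d^\top\bar{y} + \kappa\varepsilon$, which is precisely $F_{\mathrm{o\kappa}}$ in~\eqref{eq:bilevel-epsilon-wo-coupling}. Identifying $\varepsilon'$ with $\varepsilon$ turns the output of the penalization into Problem~\eqref{eq:bilevel-epsilon-wo-coupling} without any auxiliary variable.

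If one prefers to re-derive rather than quote, the argument splits into the two usual directions. For the easy inclusion, any $(x,\bar{y})$ feasible for~\eqref{eq:bilevel-epsilon} has $\varepsilon = 0$, lies in $\tilde{X}$, and is therefore feasible for~\eqref{eq:bilevel-epsilon-wo-coupling} with $F_{\mathrm{o\kappa}} = F_{\mathrm{oa}}$, giving one inequality between the optimal values. For the reverse direction I would argue by contradiction that, for $\kappa$ large enough, no globally optimal point of~\eqref{eq:bilevel-epsilon-wo-coupling} can have $\varepsilon > 0$: replacing such a $\bar{y}$ by a minimizer~$\bar{y}^{\ast}$ of $d^\top y$ over $S(x)$ for the same~$x$ keeps the point in $\tilde{X}$ and changes the objective by $d^\top(\bar{y}^{\ast} - \bar{y}) - \kappa\varepsilon$, which is negative as soon as $\kappa$ exceeds $d^\top(\bar{y}^{\ast} - \bar{y})/(f^\top\bar{y} - \varphi(x))$.

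The main obstacle is exactly this last point: producing a single, polynomially sized~$\kappa$ that works uniformly over all $x\in X$ and all lower-level feasible~$\bar{y}$, i.e., bounding from above how much the leader's term $d^\top\bar{y}$ can decrease per unit of lower-level suboptimality $f^\top\bar{y} - \varphi(x)$. I would not redo this estimate by hand; since the relevant quantities are controlled by vertices of polyhedra described by the problem's rational data, the required bound and its polynomial bit-length are precisely what the proof of Theorem~2.2 in~\textcite{Henke-et-al:2024} provides, and I would import it. Combining the two directions then yields equality of the optimal values and of the globally optimal solution sets, as claimed.
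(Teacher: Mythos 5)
Your proposal is correct and takes essentially the same route as the paper: the paper gives no detailed proof of this lemma either, but obtains it exactly as you do, by viewing the single coupling constraint $\varepsilon = 0$ in Problem~\eqref{eq:bilevel-epsilon} as an instance of the setting of Theorem~2.2 of \textcite{Henke-et-al:2024} and moving it into the leader's objective as the penalty term $\kappa\varepsilon$. Your additional checks---that the standing assumptions carry over to the instance with leader variables $(x,\bar{y})$ and that the penalization slack can be identified with $\varepsilon$ itself since $\varepsilon$ is uniquely determined at lower-level optimality---are more explicit than the paper's one-sentence justification and are all sound.
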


\begin{proposition}
  \label{prop:opt-wo-cc-to-pess-wo-cc-for-epsilon-problem}
  For any~$\kappa$, the optimistic bilevel
  problem~\eqref{eq:bilevel-epsilon-wo-coupling} without
  coupling constraints and its pessimistic version
  \begin{equation}
    \label{eq:bilevel-epsilon-wo-coupling-pessimistic}
    \min_{(x,\bar{y})\in \tilde{X}} \quad F_{\mathrm{p\kappa}}(x,\bar{y})
    \define c^\top x + d^\top \bar{y}
    + \max_{y,\varepsilon} \Defset{\kappa \varepsilon}{
      (y,\varepsilon) \in \tilde S(x, \bar{y}) }
  \end{equation}
  have the same set of feasible and globally optimal solutions.
\end{proposition}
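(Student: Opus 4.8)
The plan is to show that the two problems literally share the same feasible region and the same objective value at every feasible point; the equivalence on the level of feasible and of globally optimal solutions then follows at once. Both problems minimize over the same set~$\tilde{X}$, and both upper-level objectives have the form $c^\top x + d^\top \bar{y}$ plus an inner operation applied to $\kappa\varepsilon$ over the lower-level optimal set $\tilde{S}(x,\bar{y})$. The only formal difference is that \eqref{eq:bilevel-epsilon-wo-coupling} takes the inner $\min$ while \eqref{eq:bilevel-epsilon-wo-coupling-pessimistic} takes the inner $\max$. Hence the whole statement reduces to showing $F_{\mathrm{o\kappa}}(x,\bar{y}) = F_{\mathrm{p\kappa}}(x,\bar{y})$ for every $(x,\bar{y}) \in \tilde{X}$.

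The crux is the observation that $\varepsilon$ is constant over $\tilde{S}(x,\bar{y})$. First I would note that, since $(x,\bar{y}) \in \tilde{X}$ guarantees $Cx + D\bar{y} \ge b$, the point $\bar{y}$ is feasible for the lower-level problem~\eqref{eq:bilevel-opt-ll}, so $f^\top\bar{y} \ge \varphi(x)$, where $\varphi(x) = \min_y\defset{f^\top y}{Cx + Dy \ge b}$ denotes the lower-level value function. Consequently, the additional constraint $f^\top y \le f^\top\bar{y}$ in~\eqref{eq:bilevel-epsilon-lower} does not cut off any optimizer of~\eqref{eq:bilevel-opt-ll}, and the optimal value of~\eqref{eq:bilevel-epsilon-lower} equals $\varphi(x)$. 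Every $(y,\varepsilon) \in \tilde{S}(x,\bar{y})$ therefore satisfies $f^\top y = \varphi(x)$, and the equality constraint $\varepsilon = f^\top\bar{y} - f^\top y$ then pins down $\varepsilon = f^\top\bar{y} - \varphi(x)$, independently of which optimizer is selected. Since $\kappa\varepsilon$ is thus a fixed number on $\tilde{S}(x,\bar{y})$, its inner minimum and inner maximum coincide, which yields $F_{\mathrm{o\kappa}}(x,\bar{y}) = F_{\mathrm{p\kappa}}(x,\bar{y})$.

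It then remains to confirm that the feasible sets agree, which I would do by checking that for every $(x,\bar{y}) \in \tilde{X}$ the set $\tilde{S}(x,\bar{y})$ is non-empty: the point $(y,\varepsilon) = (\bar{y},0)$ is feasible for~\eqref{eq:bilevel-epsilon-lower}, and by the Standing Assumption the lower-level problem is bounded below and attains its optimum, so the inner $\min$ and $\max$ are both well-defined and finite. Hence both problems have feasible region exactly~$\tilde{X}$. Combining equal feasible regions with pointwise-equal objectives gives the identity of the feasible and of the globally optimal solution sets. I do not expect a genuine obstacle here; the only point that must be stated with care is why the equality constraint forces $\varepsilon$ to be single-valued on the lower-level optimal set, since this is precisely what collapses the pessimistic and the optimistic formulation onto one another.
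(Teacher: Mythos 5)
Your proof is correct and follows essentially the same route as the paper's: identical feasible sets plus the observation that $\varepsilon$ is uniquely determined on $\tilde{S}(x,\bar{y})$, so the inner minimum and maximum of $\kappa\varepsilon$ coincide and $F_{\mathrm{o\kappa}} = F_{\mathrm{p\kappa}}$ pointwise. You merely spell out details the paper leaves implicit, namely that the optimal value of~\eqref{eq:bilevel-epsilon-lower} equals the lower-level value function $\varphi(x)$, which pins down $\varepsilon = f^\top\bar{y} - \varphi(x)$, and that $\tilde{S}(x,\bar{y}) \neq \emptyset$ always holds.
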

\begin{proof}
  Both problems have the same feasible sets.
  Moreover, for any feasible point $(x,\bar{y})$,
  the value of $\varepsilon$ is uniquely determined.
  Thus, the inner minimization problem in
  \eqref{eq:bilevel-epsilon-wo-coupling} and the inner maximization
  problem in \eqref{eq:bilevel-epsilon-wo-coupling-pessimistic} have
  the same value.
  Thus, $F_{\text{o}\kappa}(x, \bar{y}) = F_{\text{p}\kappa}(x,
  \bar{y})$ holds.
\end{proof}

\begin{theorem}
  \label{thm:opt-wo-cc-to-pess-wo-cc}
  There is a polynomial-sized parameter $\kappa > 0$ so that
  the optimistic bilevel problem~\eqref{eq:bilevel-opt-wo-ccs} without
  coupling constraints has the same set of globally optimal solutions
  as the pessimistic bilevel
  problem~\eqref{eq:bilevel-epsilon-wo-coupling-pessimistic} without
  coupling constraints.
\end{theorem}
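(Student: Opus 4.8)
The plan is to prove the theorem by chaining together the three preceding results that make up the middle of Figure~\ref{fig:models-and-reforms}, namely Lemma~\ref{lemma:opt-wo-cc-to-bilevel-epsilon}, Lemma~\ref{lemma:henke-penalized-eps-version}, and Proposition~\ref{prop:opt-wo-cc-to-pess-wo-cc-for-epsilon-problem}. Since the solutions of~\eqref{eq:bilevel-opt-wo-ccs} live in $x$-space while those of the remaining three problems live in $(x,\bar{y})$-space, the identity I would actually establish is that the set of globally optimal solutions of~\eqref{eq:bilevel-opt-wo-ccs} coincides with $\proj_x$ of the set of globally optimal solutions of~\eqref{eq:bilevel-epsilon-wo-coupling-pessimistic}, together with equality of the optimal objective function values. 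As the value of $\kappa$, I would take the polynomial-sized parameter provided by Lemma~\ref{lemma:henke-penalized-eps-version}.

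First I would upgrade the feasibility-and-value correspondence of Lemma~\ref{lemma:opt-wo-cc-to-bilevel-epsilon} to a correspondence of globally optimal solutions between~\eqref{eq:bilevel-opt-wo-ccs} and the auxiliary problem~\eqref{eq:bilevel-epsilon}. The first part of the lemma shows that every bilevel feasible $x$ of~\eqref{eq:bilevel-opt-wo-ccs} lifts, via $\bar{y} \in \argmin_y \defset{d^\top y}{y \in S(x)}$, to a bilevel feasible $(x,\bar{y})$ of~\eqref{eq:bilevel-epsilon} with the same objective value, so the optimal value of~\eqref{eq:bilevel-epsilon} is at most that of~\eqref{eq:bilevel-opt-wo-ccs}. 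The second part shows that every globally optimal $(x,\bar{y})$ of~\eqref{eq:bilevel-epsilon} projects to a bilevel feasible $x$ of~\eqref{eq:bilevel-opt-wo-ccs} with the same objective value, which gives the reverse inequality. Hence the two optimal values agree, and both directions then promote to statements about optimal points: an optimal $x$ of~\eqref{eq:bilevel-opt-wo-ccs} lifts to an optimal $(x,\bar{y})$ of~\eqref{eq:bilevel-epsilon}, and every optimal $(x,\bar{y})$ of~\eqref{eq:bilevel-epsilon} projects to an optimal $x$ of~\eqref{eq:bilevel-opt-wo-ccs}. This yields that the solution set of~\eqref{eq:bilevel-opt-wo-ccs} equals $\proj_x$ of the solution set of~\eqref{eq:bilevel-epsilon}, with coinciding optimal values.

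Then I would invoke the two remaining results, which act entirely in $(x,\bar{y})$-space and therefore chain without any further bookkeeping. Lemma~\ref{lemma:henke-penalized-eps-version} gives, for the chosen $\kappa$, that~\eqref{eq:bilevel-epsilon} and the penalized optimistic problem~\eqref{eq:bilevel-epsilon-wo-coupling} have identical sets of globally optimal solutions, and Proposition~\ref{prop:opt-wo-cc-to-pess-wo-cc-for-epsilon-problem} gives that~\eqref{eq:bilevel-epsilon-wo-coupling} and its pessimistic counterpart~\eqref{eq:bilevel-epsilon-wo-coupling-pessimistic} share both their feasible and their globally optimal sets. The optimal values are preserved along this part of the chain as well, since at every optimal point one has $\varepsilon = 0$, so that $F_{\text{oa}}$, $F_{\text{o}\kappa}$, and $F_{\text{p}\kappa}$ all agree there. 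Composing these equalities with the projected equality from the previous step yields the claimed identity between the solution set of~\eqref{eq:bilevel-opt-wo-ccs} and $\proj_x$ of the solution set of~\eqref{eq:bilevel-epsilon-wo-coupling-pessimistic}, with equal optimal values throughout.

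The step I expect to be the main obstacle is the first one. Lemma~\ref{lemma:opt-wo-cc-to-bilevel-epsilon} is phrased as a feasibility-and-objective-value correspondence rather than directly as a statement about optimal solution sets, so the genuine content lies in arguing that the two optimal values coincide and that this equality promotes the feasibility correspondence to a bijection-up-to-projection on optimal points. In particular, I would need to track that the lift $x \mapsto (x,\bar{y})$ requires the specific choice $\bar{y} \in \argmin_y \defset{d^\top y}{y \in S(x)}$, so that only $\proj_x$ of the lifted optimal set is canonically determined; this is precisely why the final identity must be stated up to projection onto the $x$-space. Once the first step is settled and $\kappa$ is fixed, the remaining two steps are purely formal set equalities.
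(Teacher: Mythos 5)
Your proposal is correct and follows exactly the paper's own route: the paper proves Theorem~\ref{thm:opt-wo-cc-to-pess-wo-cc} by simply chaining Lemma~\ref{lemma:opt-wo-cc-to-bilevel-epsilon}, Lemma~\ref{lemma:henke-penalized-eps-version}, and Proposition~\ref{prop:opt-wo-cc-to-pess-wo-cc-for-epsilon-problem}, which is precisely your plan. Your additional care in upgrading the feasibility-and-value correspondence of Lemma~\ref{lemma:opt-wo-cc-to-bilevel-epsilon} to an equality of optimal-solution sets, and in stating the final identity via $\proj_x$ because the solutions live in different variable spaces, fills in details the paper leaves implicit.
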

\begin{proof}
  The claim follows from
  Lemmas~\ref{lemma:opt-wo-cc-to-bilevel-epsilon}
  and~\ref{lemma:henke-penalized-eps-version} as well as
  \rev{Proposition~\ref{prop:opt-wo-cc-to-pess-wo-cc-for-epsilon-problem}}.
\end{proof}

To sum up the overall section, we showed that we can reformulate a
pessimistic bilevel optimization problem with coupling
constraints as one without; see Figure~\ref{fig:models-and-reforms}.
Moreover, note that the resulting problem is of polynomial size in the
input size of the originally given one and that this reformulation can
be done in polynomial time.

\section{Conclusion}
\label{sec:conclusion}

We show in this note that---on the level of globally optimal
solutions---there is no difference between linear pessimistic bilevel
optimization with and without coupling constraints.
To be more precise, for a given pessimistic bilevel
optimization problem with coupling constraints, we can derive another one
without coupling constraints having the same global optimizers.
Moreover, we even show that we can go from a pessimistic problem with
coupling constraints to an optimistic problem without coupling
constraints---again having the same global solutions.

It was somehow common sense that having coupling constraints or not
makes a significant difference in pessimistic bilevel optimization.
It is now shown that this is not the case.
In particular, many novel theoretical results or even solution
techniques can be gathered for pessimistic problems with coupling
constraints by simply studying an equivalent problem without such
constraints---or even an optimistic problem.

\section*{Acknowledgements}

The second and third author acknowledge the support by the German
Bundesministerium für Bildung und Forschung within
the project \enquote{RODES} (Förderkennzeichen 05M22UTB).

\printbibliography

\end{document}